\documentclass[12pt,oneside,english]{amsart}
\usepackage[T1]{fontenc}
\usepackage[latin9]{inputenc}
\usepackage[letterpaper]{geometry}
\geometry{verbose,tmargin=3cm,bmargin=3.5cm,lmargin=2.5cm,rmargin=2.5cm,footskip=1cm}
\pagestyle{plain}
\usepackage{units}
\usepackage{amsthm}
\usepackage{amssymb}

\numberwithin{equation}{section} 
\numberwithin{figure}{section} 
\theoremstyle{plain}
 \theoremstyle{definition}
 \newtheorem*{defn*}{Definition}
\theoremstyle{plain}
\newtheorem{thm}{Theorem}
  \theoremstyle{plain}
  \newtheorem*{thm*}{Theorem}
  \theoremstyle{plain}
  \newtheorem*{cor*}{Corollary}
  \theoremstyle{plain}
  \newtheorem*{lem*}{Lemma}
  \theoremstyle{plain}
  \newtheorem{cor}[thm]{Corollary}
  \theoremstyle{remark}
  \newtheorem*{claim*}{Claim}

\usepackage{babel}

\begin{document}

\title{Groups of $p$-Deficiency One}

\author{Anitha Thillaisundaram}

\date{1st August 2011}
\begin{abstract}
The main result of \cite{key-But} is that all finitely presented
groups of $p$-deficiency greater than one are $p$-large. Here we
prove that groups with a finite presentation of $p$-deficiency one
possess a finite index subgroup that surjects onto $\mathbb{Z}$.
This implies that these groups do not have Kazhdan's property (T).
Additionally, we prove that the main result of \cite{key-But} implies
a result of Lackenby \cite{key-Lac1}.
\end{abstract}
\maketitle

\section*{Introduction}

This paper continues from the author's joint paper with Button \cite{key-But}. 

Throughout this paper, $p$ denotes a prime. Recall that the $p$-deficiency
of a group is defined as follows.
\begin{defn*}
\cite{key-Profi,key-SchPuc2} Let $G$ be a finitely generated group.
Say $G\cong\langle X|R\rangle$, with $|X|$ finite. For a prime $p$,
the \emph{$p$-deficiency} of $G$ with presentation $\langle X|R\rangle$
is\[
\text{def}_{p}(G;X,R)=|X|-\sum_{r\in R}p^{-\nu_{p}(r)},\]
where $\nu_{p}(r)=\max\left\{ k\ge0\ |\ \exists w\in F(X),\ w^{p^{k}}=r\right\} $.
The $p$-deficiency of $G$ is then defined to be the supremum of
$\text{def}_{p}(G;X,R)$ over all presentations $\langle X|R\rangle$
of $G$ with $|X|$ finite.
\end{defn*}
This is similar to the deficiency of a group.
\begin{defn*}
The \emph{deficiency} of a group $G$ is \[
\text{def}(G)=\sup_{\langle X|R\rangle}\{|X|-|R|:G\cong\langle X|R\rangle\}.\]

\end{defn*}
We recall the concepts of largeness and $p$-largeness.
\begin{defn*}
\cite{key-Lac2} Let $G$ be a group, and let $p$ be a prime. Then 

$\bullet$ $G$ is \emph{large} if some (not necessarily normal) subgroup
with finite index admits a non-abelian free quotient; 

$\bullet$ $G$ is \emph{$p$-large} if some normal subgroup with
index a power of $p$ admits a non-abelian free quotient.
\end{defn*}
\pagebreak{}

The main result of \cite{key-But} is the following.
\begin{thm}
(\cite{key-But}, Theorem 2.2) \label{thm:Main} Let $p$ be a prime.
If $G$ is a finitely presented group with $p$-deficiency greater
than one, then $G$ is $p$-large.
\end{thm}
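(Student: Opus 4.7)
My plan is to combine an index formula for $p$-deficiency (of Schreier type) with a mod-$p$ homology growth criterion for $p$-largeness of the kind developed by Lackenby. The first, and most delicate, step is to prove that $p$-deficiency scales linearly under passage to finite-index normal subgroups of $p$-power index: specifically, if $H \trianglelefteq G$ with $[G:H]$ a power of $p$, then
\[
\text{def}_p(H) - 1 \geq [G:H] \cdot (\text{def}_p(G) - 1).
\]
Given a presentation $\langle X \mid R \rangle$ of $G$ with $p$-deficiency close to $\text{def}_p(G)$, Reidemeister--Schreier rewriting with respect to a Schreier transversal $T$ for $\pi^{-1}(H) \leq F(X)$ produces a presentation of $H$ with $[G:H](|X|-1)+1$ generators and $[G:H] \cdot |R|$ relators $\tau(trt^{-1})$ for $(t,r) \in T \times R$. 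The technical content is to show that the $\nu_p$ of each rewritten relator, computed in the new free group on Schreier generators, remains at least $\nu_p(r)$, so that $\sum_s p^{-\nu_p(s)} \leq [G:H] \sum_r p^{-\nu_p(r)}$; totalling contributions then gives the claim.

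Next I would record the elementary bound $\text{def}_p(G) \leq d_p(G) := \dim_{\mathbb{F}_p} H_1(G; \mathbb{F}_p)$: relators $r$ with $\nu_p(r) \geq 1$ abelianize to $p$-divisible elements and so do not cut the mod-$p$ rank, while relators with $\nu_p(r) = 0$ each contribute $1$ to the $p$-deficiency sum, so a rank count yields the inequality.

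Putting these together, fix $\epsilon > 0$ with $\text{def}_p(G) \geq 1 + \epsilon$ and build a descending chain $G = G_0 \triangleright G_1 \triangleright \cdots$ of normal subgroups of $G$ with each $G/G_i$ a finite $p$-group of unbounded order (for instance, via the mod-$p$ lower central series of $G$). Iterating the Schreier-type bound and combining with $\text{def}_p \leq d_p$ yields
\[
\frac{d_p(G_i) - 1}{[G:G_i]} \geq \epsilon > 0
\]
uniformly in $i$. A Lackenby-style largeness criterion based on positive mod-$p$ rank gradient then applies; since the chain consists of subgroups normal in $G$ of $p$-power index, the output sharpens from largeness to $p$-largeness, producing the required non-abelian free quotient of a normal subgroup of $p$-power index.

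The step I expect to be the main obstacle is the Schreier-type inequality. Although the identity $trt^{-1} = (twt^{-1})^{p^k}$ is automatic in the ambient free group $F(X)$, the $p^k$-th root $twt^{-1}$ need not lie in the Schreier-free subgroup $\pi^{-1}(H)$, and Reidemeister--Schreier rewriting is not a free-group homomorphism. Verifying carefully that the $p$-power structure is preserved (or suitably compensated for) under rewriting, and then managing the supremum over all presentations of $G$ in the definition of $\text{def}_p$, is where the bulk of the technical work lies and is what genuinely distinguishes the $p$-deficiency setting from the ordinary deficiency one.
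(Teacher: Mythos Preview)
Your plan is essentially the argument the paper invokes: the Schreier-type multiplicativity of $p$-deficiency under $p$-power index subgroups is precisely Schlage-Puchta's contribution \cite{key-Profi,key-SchPuc2}, and the deduction of $p$-largeness from linear growth of $d_p$ along a normal $p$-chain is exactly Lackenby's criterion (\cite{key-Lac2}, Theorem~1.15). The obstacle you flag is real but resolves cleanly once you pass to index~$p$ and iterate: if $r=w^{p^k}$ and $w\notin\tilde H$ with $[G:H]=p$, then $\{1,w,\ldots,w^{p-1}\}$ is a transversal, all conjugates $w^a r w^{-a}$ coincide with $r=(w^p)^{p^{k-1}}$, and $w^p\in\tilde H$, so the single rewritten relator has $\nu_p\ge k-1$ and contributes $p^{-(k-1)}=p\cdot p^{-k}$, matching the case $w\in\tilde H$.
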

Theorem \ref{thm:Main} is proved using results of Lackenby (\cite{key-Lac2},
Theorem 1.15) and Schlage-Puchta \cite{key-Profi,key-SchPuc2}. Please
see \cite{key-But} for details of the proof.

By Corollary 2.1 of \cite{key-But}, groups with a finite presentation
of $p$-deficiency one are infinite. In this paper, we prove the following.
\begin{thm*}
Let $\Gamma$ be a finitely presented group with a presentation of
$p$-deficiency equal to one, for some prime $p$. Then $\Gamma$
has a finite index subgroup $H$ that surjects onto $\mathbb{Z}$.
\end{thm*}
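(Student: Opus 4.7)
The plan is to split on whether $\Gamma$ has a presentation of $p$-deficiency one in which no relator is a proper $p$-th power. If such a presentation $\langle X | R\rangle$ exists, its ordinary deficiency equals $1$, and the classical inequality for a presentation $2$-complex $K$, namely $b_1(\Gamma;\mathbb{Q})-b_2(\Gamma;\mathbb{Q})\geq b_1(K;\mathbb{Q})-b_2(K;\mathbb{Q})=|X|-|R|=1$, gives $b_1(\Gamma;\mathbb{Q})\geq 1$, so $H=\Gamma$ itself surjects onto $\mathbb{Z}$.

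Otherwise some relator $r=w^{p^k}$ with $k\geq 1$ occurs. First observe that $\text{def}_p(\Gamma)=1$ forces $d_p(\Gamma)\geq 1$: relators with $\nu_p\geq 1$ vanish modulo $p$ in the abelianization, and the identity $\sum_r p^{-\nu_p(r)}=|X|-1$ forces at most $|X|-1$ relators with $\nu_p=0$. Pick a surjection $\Gamma\twoheadrightarrow\mathbb{F}_p$ with kernel $N\trianglelefteq\Gamma$ of index $p$, and apply Reidemeister--Schreier: $N$ has a presentation on $p|X|-(p-1)$ generators with relators $\{trt^{-1}:t\in T,\,r\in R\}$ for a Schreier transversal $T$. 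The crucial step is to bound $\sum p^{-\nu_p(trt^{-1})}$. For each $r=w^{p^k}$, I distinguish two cases. If (i) $w\in\widetilde N$, the $p$ conjugates are pairwise non-conjugate in $\widetilde N$ (the cyclic centralizer of the root of $w$ maps trivially to $\mathbb{F}_p$), yet each is a $p^k$-th power in $\widetilde N$, contributing $p\cdot p^{-k}$ in total. If (ii) $w\notin\widetilde N$, the centralizer maps onto $\mathbb{F}_p$, so the $p$ conjugates become pairwise conjugate in $\widetilde N$; a single representative with $\nu_p=k-1$ suffices, again contributing $p^{-(k-1)}=p\cdot p^{-k}$. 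Summing over $r$ yields $p(|X|-1)$, so $\text{def}_p(N)\geq p|X|-(p-1)-p(|X|-1)=1$.

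If the inequality is ever strict, Theorem~\ref{thm:Main} applies to that subgroup: it is $p$-large and hence contains a finite-index subgroup with a non-abelian free quotient, which surjects onto $\mathbb{Z}$. Otherwise the iteration produces a strict descending tower $\Gamma\supsetneq N_1\supsetneq N_2\supsetneq\cdots$ of normal subgroups of $p$-power index with $\text{def}_p(N_i)=1$ throughout, and the argument is closed by Lackenby's result \cite{key-Lac1} on the growth of first homology along such $p$-towers, which produces a finite-index subgroup with positive first $\mathbb{Q}$-Betti number. The principal obstacle is the exact balance $p\cdot p^{-k}$ in cases (i) and (ii), which prevents $\text{def}_p$ from strictly increasing on its own; invoking Lackenby's input to close the iteration is the most delicate step.
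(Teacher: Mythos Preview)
Your Reidemeister--Schreier computation is correct (it is Schlage-Puchta's inequality), but observe that your own case analysis always yields \emph{equality}: case~(i) contributes $p\cdot p^{-k}$ and case~(ii) contributes $p^{-(k-1)}=p\cdot p^{-k}$, so the new relator sum is exactly $p(|X|-1)$ against $p(|X|-1)+1$ generators, and $\text{def}_p(N)=1$ on the nose. The clause ``if the inequality is ever strict'' therefore never fires, and the entire argument rests on the last sentence. That sentence is a genuine gap. The reference \cite{key-Lac1} is about adding high-powered relations to large groups, not homology growth in towers, and Lackenby's actual largeness criterion (from \cite{key-Lac2}, which is what underlies Theorem~\ref{thm:Main}) needs $d_p(N_i)/[\Gamma:N_i]$ bounded away from zero; you only have $d_p(N_i)\ge 1$, so the ratio tends to $0$. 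An infinite $p$-tower with $d_p\ge 1$ at every stage is an extremely weak hypothesis---any infinite residually-$p$ group has one---and it does not by itself force a finite-index subgroup with positive first Betti number. Nor can you count on eventually landing in your first case: if the image of some root $w$ lies in the kernel of every surjection $\Gamma\to\mathbb{F}_p$, then you are in case~(i) for that relator at every step and its $\nu_p$ never drops.

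The paper avoids descending along a tower. Instead it \emph{weakens} one relator, replacing $w^{p^a}$ by $w^{p^{a+1}}$ to obtain $G$ with $\text{def}_p(G)>1$, hence $p$-large by Theorem~\ref{thm:Main}, and with $\Gamma\cong G/\langle\langle w^{p^a}\rangle\rangle$. If $w^{p^a}$ lies in the finite-index $H\trianglelefteq G$ surjecting onto $F_2$, then, being torsion, it dies in $F_2$ and the surjection descends, so $\Gamma$ is itself $p$-large. Otherwise $w$ has order $p^{a+1}$ in the finite $p$-group $G/H$, and a short Frattini-subgroup argument shows that $w$ then has \emph{exact} order $p^a$ in the finite $p$-quotient $G/H\langle\langle w^{p^a}\rangle\rangle$ of $\Gamma$; Allcock's bound (Theorem~\ref{thm:Allc}) now produces a finite-index subgroup whose abelianization has rank at least one. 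The ingredient your approach is missing is precisely this Allcock-type input to close the balanced case; iterating the $p$-deficiency formula cannot substitute for it.
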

The Related Burnside Problem (\cite{key-Khu}, Problem 8.52) asks
whether or not there exist infinite finitely presented torsion groups.
The theorem above extends Corollary 2.4 of \cite{key-But} to give
the following response to the Related Burnside Problem.
\begin{cor*}
Let $G$ be an infinite finitely presented group with a presentation
of $p$-deficiency greater than or equal to one, for some prime $p$.
Then $G$ is not torsion.
\end{cor*}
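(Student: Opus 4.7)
The plan is to extract quantitative mod-$p$ homology growth from the equality $d_p(\Gamma)=1$ and then apply Lackenby's largeness criterion. I fix a presentation $\Gamma=\langle X\mid R\rangle$ realising $d_p=1$, let $R_0=\{r\in R:\nu_p(r)=0\}$, and rewrite the hypothesis as
\[
|X|-|R_0|-s\;=\;1,\qquad s\;=\;\sum_{r\in R\setminus R_0}p^{-\nu_p(r)}.
\]
The first key observation is that $|X|$ and $|R_0|$ are integers, forcing $s$ to be a non-negative integer. The argument splits on whether $s=0$ or $s\geq 1$.

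When $s=0$ no relator is a proper $p$-th power, the presentation has $|R_0|=|X|-1$ relators, the integer relator matrix has only $|X|-1$ rows, $\Gamma^{\mathrm{ab}}$ has $\mathbb{Z}$-rank at least one, and $H=\Gamma$ already surjects onto $\mathbb{Z}$. Assume therefore $s\geq 1$. Then
\[
\dim_{\mathbb{F}_p}H_1(\Gamma;\mathbb{F}_p)\;\geq\;|X|-|R_0|\;=\;1+s\;\geq\;2,
\]
producing a normal subgroup $\Gamma_1\triangleleft\Gamma$ of index $p$. I apply Reidemeister--Schreier on a Schreier transversal; the second key observation is that $\nu_p$ is invariant under conjugation in the ambient free group, so each of the $p$ rewrites of a relator $r$ retains the value $\nu_p(r)$. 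Iterating along a chain $\Gamma=\Gamma_0\triangleright\Gamma_1\triangleright\Gamma_2\triangleright\cdots$ of normal subgroups of $p$-power index (always available since $\dim H_1(\Gamma_k;\mathbb{F}_p)\geq 1$ at every step), the $k$-th Schreier presentation has $p^k(|X|-1)+1$ generators, $p^k|R_0|$ relators of $\nu_p$-value zero, and residual sum $p^k s$, so that
\[
d_p(\Gamma_k)\;\geq\;1\qquad\text{and}\qquad\dim_{\mathbb{F}_p}H_1(\Gamma_k;\mathbb{F}_p)\;\geq\;1+p^k s.
\]

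Consequently, $\dim H_1(\Gamma_k;\mathbb{F}_p)/[\Gamma:\Gamma_k]\to s\geq 1>0$ while the Schreier relator count grows only linearly in $[\Gamma:\Gamma_k]$. These are the inputs for Lackenby's largeness criterion (Theorem~1.15 of \cite{key-Lac2}), from which $\Gamma$ is $p$-large, and so has a finite-index subgroup with a non-abelian free quotient, \emph{a fortiori} a finite-index subgroup surjecting onto $\mathbb{Z}$. The main obstacle in carrying this out is not the algebraic manipulation but the precise matching of hypotheses to Lackenby's theorem --- in particular that the relator count grows linearly enough relative to the homology, and that the invariance of $\nu_p$ really does transfer through Reidemeister--Schreier rewriting --- which is the key combinatorial content of the argument.
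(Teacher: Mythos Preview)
Your argument has a genuine gap in the case $s\geq 1$. The assertion that ``$\nu_p$ is invariant under conjugation in the ambient free group, so each of the $p$ rewrites of a relator $r$ retains the value $\nu_p(r)$'' conflates two different statements. It is true that $\nu_p(trt^{-1})=\nu_p(r)$ \emph{as elements of $F(X)$}; but the Reidemeister--Schreier procedure rewrites $trt^{-1}$ in a new free basis $Y$ for the subgroup $M$, and $\nu_p$ computed in $F(Y)$ can drop. Concretely, if $r=w^{p^{a}}$ and the base $w\notin M$, then $w$ itself cannot be rewritten; one has $r=(w^{p})^{p^{a-1}}$ with $w^{p}\in M$, and the rewrite of $w^{p}$ need not be a proper power in $F(Y)$, so the rewritten relator only has $\nu_p\geq a-1$. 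The Schlage-Puchta bookkeeping compensates for this by noting that in that situation one needs only \emph{one} rewrite rather than $p$, so the weighted sum $\sum p^{-\nu_p}$ still multiplies by $p$ --- but the unweighted count of relators with $\nu_p=0$ can genuinely grow, and your inequality $\dim_{\mathbb{F}_p}H_1(\Gamma_k;\mathbb{F}_p)\geq 1+p^{k}s$ does not follow.

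The infinite dihedral group $D_\infty=\langle x,y\mid x^{2},y^{2}\rangle$ is a decisive counterexample (and is singled out in Section~3 of the paper for exactly this reason). Here $p=2$, $R_0=\varnothing$, $s=1$, so $d_2=1$ with $s\geq 1$. Yet every finite-index subgroup of $D_\infty$ is isomorphic to $\mathbb{Z}$ or to $D_\infty$, so $\dim_{\mathbb{F}_2}H_1\leq 2$ throughout any $2$-tower, not $\geq 1+2^{k}$; and $D_\infty$ is certainly not large. Thus the conclusion you draw from Lackenby's criterion --- that $\Gamma$ is $p$-large whenever $s\geq 1$ --- is false, not merely unproven. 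The paper proceeds by a different dichotomy: either every base $w_i$ attains its full order $p^{a_i}$ in some finite quotient, and then Allcock's rank estimate (Theorem~\ref{thm:Allc}) gives a finite-index subgroup surjecting onto $\mathbb{Z}$; or some $w_q^{p^{a_q-1}}$ lies in every finite-index normal subgroup, in which case one relaxes the exponent on $w_q$ to produce an auxiliary group of $p$-deficiency strictly greater than one, applies Theorem~\ref{thm:Main} there, and transports the conclusion back to $\Gamma$.
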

We note here the definition of Kazhdan's property (T). Please see
\cite{key-Bekka} for more information of property (T).
\begin{defn*}
\cite{key-Bekka} Let $\Gamma$ be a finitely generated group. 

(a) Given a unitary representation $V$ of $\Gamma$ and a generating
set $S$ of $\Gamma$, we define $\kappa(\Gamma;S;V)$ to be the largest
$\varepsilon\ge0$ such that for any $v\in V$ there exists $s\in S$
with $||sv-v||\ge\varepsilon||v||$. 

(b) Given a generating set $S$ of $\Gamma$, the \emph{Kazhdan constant}
$\kappa(\Gamma;S)$ is defined to be the infimum of the set $\left\{ \kappa(\Gamma;S;V)\right\} $
where $V$ runs over all unitary representations of $\Gamma$ without
non-zero invariant vectors. 

(c) The group $\Gamma$ is called a \emph{Kazhdan group} (equivalently
$\Gamma$ is said to have \emph{Kazhdan's property (T)}) if $\kappa(\Gamma;S)>0$
for some (hence any) finite generating set $S$ of $\Gamma$.
\end{defn*}
It is proved in (\cite{key-Bekka}, Corollaries 1.3.6 \& 1.7.2) that
if a group has property (T), then its finite index subgroups must
have finite abelianization.

Section 1 of this paper presents the proof of our main result. The
corollaries of our main result are the content of Section 2. We finish
with Section 3 which gives an interesting example of a group with
a finite presentation of 3-deficiency one, and we comment on Ershov's
finitely presented Golod-Shafarevich group with property (T).

This paper is mostly an extract, which was under the supervision of
Jack Button, of the author's PhD thesis. 

The author is grateful to the Cambridge Commonwealth Trust, the Cambridge
Overseas Research Scholarship, and the Leslie Wilson Scholarship (from
Magdalene College) for their financial support.

$\ $

\section{Main Result}

There is an amount of ambiguity in saying that a group is of $p$-deficiency
one. Formally, the $p$-deficiency of a group is the supremum of $\text{def}_{p}(\langle X|R\rangle)$
over all presentations $\langle X|R\rangle$ of the group. Therefore
it is theoretically possible for the $p$-deficiency of a group to
be one in the limit, but with none of $\text{def}_{p}(\langle X|R\rangle)$
being equal to one.

We avoid this delicate situation by insisting that the group has a
presentation of $p$-deficiency one. This is the convention that we
adopt whenever we deal with $p$-deficiency one groups.

First, we note a result from \cite{key-Allc}.
\begin{thm}
\label{thm:Allc}\cite{key-Allc} Let $G$ be a group with presentation\[
\langle a_{1},\ldots,a_{n}|1=w_{1}^{r_{1}}=\ldots=w_{m}^{r_{m}}\rangle\]
where each $w_{j}$ is a word in the $a_{i}$ and their inverses.
Suppose that $H$ is a normal subgroup of $G$ of index $N<\infty$
and that for each $j$, $w_{j}^{k}\not\in H$ for $k=1,\ldots,r_{j}-1$.
Then the rank of the abelianization of $H$ is at least\[
1+N\left(n-1+\sum_{i}\frac{1}{r_{i}}\right).\]

\end{thm}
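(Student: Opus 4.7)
The plan is to realise $G$ as the fundamental group of its presentation 2-complex $X$ (one 0-cell, $n$ 1-cells labelled by the generators, and $m$ 2-cells, with the $j$-th attached along the word $w_{j}^{r_{j}}$), pass to the regular $N$-sheeted cover $\tilde{X}\to X$ corresponding to $H\triangleleft G$, and bound $\mathrm{rk}(H^{\mathrm{ab}}) = \dim_{\mathbb{Q}}H_{1}(\tilde{X};\mathbb{Q})$ from below via the Euler characteristic. The cover $\tilde{X}$ has $N$ vertices, $Nn$ edges, and $Nm$ two-cells, so $\chi(\tilde{X})=N(1-n+m)$, and as $\tilde{X}$ carries no 3-cells
$$\mathrm{rk}(H^{\mathrm{ab}}) \;=\; 1 - \chi(\tilde{X}) + \dim_{\mathbb{Q}} H_{2}(\tilde{X};\mathbb{Q}).$$
The task therefore reduces to producing enough linearly independent rational 2-cycles in $\tilde{X}$.

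The condition $w_{j}^{k}\notin H$ for $1\leq k\leq r_{j}-1$ is the statement that the image of $w_{j}$ in $G/H$ has order exactly $r_{j}$, and this is the hypothesis that drives the construction of 2-cycles. Index the $N$ lifts of the two-cell $D_{j}$ in $\tilde{X}$ by the vertex at which the lifted boundary loop begins: the lift based at vertex $Hg$ has oriented boundary the closed edge-path $Hg \to Hgw_{j} \to Hgw_{j}^{2} \to \cdots \to Hgw_{j}^{r_{j}}=Hg$ in the 1-skeleton of $\tilde{X}$. Translating the starting vertex by $w_{j}$ merely rotates this loop cyclically, so the $r_{j}$ lifts based at the distinct vertices $Hg, Hgw_{j},\ldots,Hgw_{j}^{r_{j}-1}$ share a common oriented boundary circle in $\tilde{X}^{(1)}$. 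Within each such $\langle w_{j}\rangle$-orbit on the set of lifts --- of which there are $N/r_{j}$, each of size $r_{j}$ --- the $r_{j}-1$ pairwise differences of lifts are rational 2-cycles and are linearly independent. Summing over orbits and over the $m$ relators produces
$$\sum_{j}\frac{N}{r_{j}}(r_{j}-1) \;=\; Nm - N\sum_{i}\frac{1}{r_{i}}$$
independent rational 2-cycles in total (independence across orbits is automatic since distinct orbits involve disjoint 2-cells). Substituting this lower bound on $\dim_{\mathbb{Q}}H_{2}(\tilde{X};\mathbb{Q})$ into the Euler-characteristic identity and collecting terms yields the claimed lower bound on $\mathrm{rk}(H^{\mathrm{ab}})$.

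The main obstacle is the common-boundary claim: one must verify that the $r_{j}$ lifts within a single $\langle w_{j}\rangle$-orbit really do possess a single oriented boundary circle in $\tilde{X}^{(1)}$. This boils down to a careful combinatorial tracking of how the lift of the attaching map $S^{1}\to X^{(1)}$ (reading $w_{j}^{r_{j}}$) into $\tilde{X}^{(1)}$ behaves under cyclic rotation of the $S^{1}$-basepoint versus translation of the starting vertex by $w_{j}$; it is precisely the order hypothesis on $w_{j}$ that guarantees $r_{j}$ genuinely distinct lifts per orbit, so that each orbit contributes its full quota of $r_{j}-1$ independent 2-cycles. Once this step is in place, the remainder of the argument is a routine arithmetic simplification.
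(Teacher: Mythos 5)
The paper gives no proof of this theorem---it is quoted from Allcock's article---so there is nothing internal to compare against; your argument is in fact precisely Allcock's original one (presentation $2$-complex, regular $N$-sheeted cover, Euler characteristic, and $2$-cycles arising from lifted $2$-cells with coincident boundaries). The structural points are all sound: the lift of the attaching circle of the $j$-th $2$-cell based at the vertex $Hg$ passes through $Hg, Hgw_{j},\ldots,Hgw_{j}^{r_{j}-1}$ and closes up; translating the base vertex by $w_{j}$ cyclically rotates this edge path, so the lifts in one $\langle w_{j}\rangle$-orbit have equal boundary $1$-chains; and the hypothesis $w_{j}^{k}\notin H$ forces each orbit to have full size $r_{j}$ (here you should invoke normality of $H$ explicitly: distinctness of the cosets $Hgw_{j}^{k}$ for arbitrary $g$ requires $w_{j}^{k}\notin g^{-1}Hg$, not merely $w_{j}^{k}\notin H$). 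Each relator therefore contributes $N-N/r_{j}$ independent $2$-cycles supported on pairwise disjoint sets of $2$-cells, exactly as you say.

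The step that fails is the final one. Your count gives $\dim_{\mathbb{Q}}H_{2}\ge Nm-N\sum_{i}1/r_{i}$, and substituting into $b_{1}=1-\chi+b_{2}$ with $\chi=N(1-n+m)$ yields
\[
\text{rk}(H^{\text{ab}})\ \ge\ 1+N\Bigl(n-1-\sum_{i}\frac{1}{r_{i}}\Bigr),
\]
with a \emph{minus} sign; the terms do not ``collect'' to the displayed bound with the plus sign, and you should have noticed this rather than asserting it. The resolution is that the statement as printed contains a sign error: with the plus sign it is false (take $G=\langle a\,|\,a^{r}\rangle$ and $H$ trivial, where it would assert that the trivial group has abelianization of rank at least $2$), whereas the minus-sign version is Allcock's actual theorem and is the one the paper really uses in Case (i) of the proof of Theorem \ref{thm:ourpdef1}, where it gives rank at least $1+N(\text{def}_{p}(\Gamma)-1)=1$. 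So your argument establishes the correct statement, but checking your own arithmetic against the displayed formula would have caught the discrepancy.
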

We now prove the following.
\begin{thm}
\label{thm:ourpdef1} Let $\Gamma$ be a finitely presented group
with a presentation of $p$-deficiency equal to one, for some prime
$p$. Then $\Gamma$ has a finite index subgroup $H$ that surjects
onto $\mathbb{Z}$.\end{thm}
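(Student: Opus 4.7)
The plan is to apply Allcock's theorem (Theorem \ref{thm:Allc}) to a carefully chosen finite-index normal subgroup $H$ of $\Gamma$. Fix a presentation $\Gamma = \langle x_1,\ldots,x_n \mid w_1^{p^{k_1}},\ldots,w_m^{p^{k_m}}\rangle$ of $p$-deficiency one, with $k_j = \nu_p(r_j)$, so that $n - \sum_j p^{-k_j} = 1$. The numerical input is that, on substituting $r_j = p^{k_j}$ into the lower bound of Theorem \ref{thm:Allc} and invoking this identity, the bound reduces to at least $1$. Hence once an $H$ satisfying Allcock's hypothesis has been exhibited, the abelianization of $H$ has rank at least $1$, and so $H$ surjects onto $\mathbb{Z}$.

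So the task is to produce a finite-index normal subgroup $H$ of $\Gamma$ such that, for every $j$ with $k_j \geq 1$, the image of $w_j$ in $\Gamma/H$ has order exactly $p^{k_j}$ (the condition in Allcock's theorem is vacuous when $k_j = 0$). I would build $H$ inductively along a $p$-power-index tower. Since relators with $k_j \geq 1$ are $p$-th powers in the free group, they vanish in $H_1(\Gamma;\mathbb{F}_p)$, giving $\dim_{\mathbb{F}_p} H_1(\Gamma;\mathbb{F}_p) \geq n - m_0$, where $m_0$ counts the relators with $k_j = 0$; the $p$-deficiency identity forces $n - m_0 \geq 1$. So $\Gamma$ surjects onto $\mathbb{Z}/p$, and a Reidemeister--Schreier rewrite shows that the kernel inherits a presentation of $p$-deficiency at least $1$ (the $p(n-1)+1$ new generators against the $pm$ new relators with $p$-adic valuations at least the $k_j$). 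Iterating yields a descending chain $\Gamma = H_0 \supseteq H_1 \supseteq \cdots$ of normal subgroups with $[H_i : H_{i+1}] = p$.

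The main obstacle is to show that some $H_L$ in this tower realises the required full orders of the $w_j$'s in $\Gamma/H_L$. I would argue that if every $w_j$ already has order $p^{k_j}$ in $\Gamma$ (as one expects when the given presentation is minimal in an appropriate $p$-sense), then the residual $p$-finiteness of the tower detects these orders at some finite stage. If instead some $w_j$ has strictly smaller order in $\Gamma$, one can rewrite the offending relator with the smaller exponent, and either the resulting presentation is again of $p$-deficiency one, to which the argument recurses, or is of $p$-deficiency strictly greater than one, in which case Theorem \ref{thm:Main} applies directly and yields the stronger conclusion of $p$-largeness, hence a finite-index subgroup surjecting onto a non-abelian free group and in particular onto $\mathbb{Z}$. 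Applying Theorem \ref{thm:Allc} to $H = H_L$ then delivers the required surjection onto $\mathbb{Z}$.
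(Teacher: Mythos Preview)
Your numerical setup is right: once a finite-index normal $H$ is found with each $w_j$ of full order $p^{k_j}$ in $\Gamma/H$, Allcock's bound gives rank at least $1$. But your construction of $H$ has two gaps. First, your dichotomy is not the relevant one: even if every $w_j$ has order exactly $p^{k_j}$ in $\Gamma$, nothing forces this to be visible in a finite quotient, since $\Gamma$ is not assumed residually finite and the $p$-tower you build need not separate $w_j^{p^{k_j-1}}$ from $1$; the phrase ``residual $p$-finiteness of the tower'' is exactly the unproved assertion. Second, your fallback rewriting goes the wrong way: replacing $w_j^{p^{k_j}}$ by $w_j^{p^{k_j'}}$ with $k_j'<k_j$ increases the contribution $p^{-k_j'}>p^{-k_j}$ and so \emph{lowers} the $p$-deficiency below $1$, at which point neither recursion nor Theorem~\ref{thm:Main} is available.

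The paper's proof fixes both issues by moving an exponent \emph{up} rather than down: replace $w_q^{p^{a_q}}$ by $w_q^{p^{a_q+1}}$ to get an auxiliary group $G$ with $\mathrm{def}_p(G)>1$, hence $p$-large by Theorem~\ref{thm:Main}, with $\Gamma\cong G/\langle\!\langle w_q^{p^{a_q}}\rangle\!\rangle$. If $H\trianglelefteq G$ has $p$-power index and $\psi\colon H\twoheadrightarrow F_2$, then either $w_q^{p^{a_q}}\in H$, whence all its $G$-conjugates are torsion in $H$ and die under $\psi$, so $\Gamma$ itself is $p$-large; or $w_q$ has order $p^{a_q+1}$ in the finite $p$-group $G/H$, and a Frattini-subgroup lemma then forces $w_q$ to have exact order $p^{a_q}$ in the finite $p$-quotient $G/(H\langle\!\langle w_q^{p^{a_q}}\rangle\!\rangle)$ of $\Gamma$, supplying precisely the subgroup needed for Allcock. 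The missing idea is this passage through the $p$-large cover $G$.
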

\begin{proof}
Suppose $\Gamma$ is a finitely presented group with a presentation
of $p$-deficiency equal to one. Thus we have \[
\Gamma\cong\langle x_{1},\ldots,x_{d}|w_{1},\ldots,w_{r},w_{r+1}^{p^{a_{r+1}}},\ldots,w_{q}^{p^{a_{q}}}\rangle\]
with \[
\text{def}_{p}(\Gamma)=d-r-\sum_{i=r+1}^{q}\frac{1}{p^{a_{i}}}=1\]
where $d,q\in\mathbb{N}$, $0\le r\le q$, and $a_{r+1}\le\ldots\le a_{q}$
are positive integers.

$\ $

We now consider the following two cases which depend on the orders
of the $w_{i}$'s.

$\ $

\emph{\underbar{Case (i).}}

For $i=1,\ldots,q$, if the $w_{i}$'s have exact orders as in the
presentation above, then for some finite index normal subgroup $H$
of $\Gamma$ we have that the rank of the abelianization of $H$ is
at least one by Theorem \ref{thm:Allc}. Hence $H$ surjects onto
$\mathbb{Z}$, as required.

$\ $

\emph{\underbar{Case (ii).}}

Now without loss of generality suppose that for all normal subgroups
$K$ of finite index in $\Gamma$, we have $w_{q}^{p^{a_{q}-1}}\in K$.

Consider now \[
G=\langle x_{1},\ldots,x_{d}|w_{1},\ldots,w_{r},w_{r+1}^{p^{a_{r+1}}},\ldots,w_{q-1}^{p^{a_{q-1}}},w_{q}^{p^{a_{q}+1}}\rangle.\]
Note that \[
\Gamma\cong\nicefrac{G}{\langle\langle w_{q}^{p^{a_{q}}}\rangle\rangle}\]
and as $\text{def}_{p}(G)>1$ we have that $G$ is $p$-large.

Recall the definition of $p$-large. Let $H$ be a normal subgroup
in $G$ of index $p^{k}$, for $k\ge0$, such that there exists a
surjection $\psi:H\twoheadrightarrow F_{2}$.

For ease of notation, we write $w:=w_{q}$ and $a:=a_{q}$. We may
assume that the order of $w$ in $G$ is $p^{a+1}$, as if $o(w)<p^{a+1}$,
then $\Gamma=G$ is $p$-large, and we are done.

Now, our plan is to consider $\nicefrac{G}{\langle\langle w^{p^{a}}\rangle\rangle}$
and show that this quotient group has a finite index subgroup that
surjects onto $\mathbb{Z}$. As $\Gamma\cong\nicefrac{G}{\langle\langle w^{p^{a}}\rangle\rangle}$,
this completes the proof of our theorem.

$\ $

Consider the order of $\overline{w}$, the image of $w$ in $\nicefrac{G}{H}$.

$\ $

a) If $o(\overline{w})$ in $\nicefrac{G}{H}$ is $\le p^{a}$, then
this implies that $w^{p^{a}}\in H$. We will show that $\nicefrac{G}{\langle\langle w^{p^{a}}\rangle\rangle}$
is $p$-large to obtain our contradiction.

Let $k_{1},\ldots,k_{s}$ be a set of representatives for the cosets
of $H$ in $G$. Let $m\ (\le p^{a})$ be the smallest positive integer
such that $k_{j}w^{m}k_{j}^{-1}\in H$, for each $j$. Note that $m$
divides $p^{k}=[G:H]$. Let $n$ be any positive integer, and let
$G_{mn}=\langle\langle w^{mn}\rangle\rangle$ be the subgroup of $G$
generated normally by $w^{mn}$. Note that this is contained in $H$,
and is in fact the subgroup of $H$ normally generated by $\{k_{j}w^{mn}k_{j}^{-1}:1\le j\le s\}$.
Now $\{\psi(k_{j}w^{mn}k_{j}^{-1}):1\le j\le s\}$ is a collection
of elements in $F_{2}$. The key thing to note here, is that $k_{j}w^{mn}k_{j}^{-1}$
all have orders a power of $p$ in $H$, and so their images under
$\psi$ must be trivial in $F_{2}$. So $G_{mn}\le\ker\psi$, and
we have the induced surjection $\overline{\psi}:\nicefrac{H}{G_{mn}}\twoheadrightarrow F_{2}$.
Now $\nicefrac{H}{G_{mn}}$ has finite $p^{\text{th}}$ power index
in $\nicefrac{G}{G_{mn}}$. Therefore $\nicefrac{G}{G_{mn}}$ is $p$-large.
Finally, we take $mn=p^{a}$, and therefore $\nicefrac{G}{\langle\langle w^{p^{a}}\rangle\rangle}\cong\Gamma$
is $p$-large. The result now follows for $\Gamma$.

$\ $

b) If $o(\overline{w})$ in $\nicefrac{G}{H}$ is $p^{a+1}$, then
in $\nicefrac{G}{H\langle\langle w^{p^{a}}\rangle\rangle}$ the image
of $w$ has order dividing $p^{a}$. As this is a finite $p$-quotient
of $\Gamma$, we use the fact that $w$ has, in fact, exact order
$p^{a}$ in $\nicefrac{G}{H\langle\langle w^{p^{a}}\rangle\rangle}$,
and so we are back in Case (i).
\end{proof}
The last line of the proof draws on the following simple fact from
finite $p$-groups.
\begin{lem*}
Let $g$ be an element of a finite $p$-group $G$, and say $o(g)=p^{k}$
for $k>0$. Let $N=\langle\langle g^{p^{k-1}}\rangle\rangle$. Then
$g^{p^{k-2}}\not\in N$.\end{lem*}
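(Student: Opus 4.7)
The plan is to induct on $|G|$, using that every nontrivial finite $p$-group has nontrivial center. I tacitly assume $k \ge 2$, since otherwise $p^{k-2}$ is not a nonnegative integer. The base case is $G = \langle g \rangle$, where $N = \langle g^{p^{k-1}} \rangle$ has order $p$ and so cannot contain the element $g^{p^{k-2}}$, which has order $p^2$.

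For the inductive step, pick a central element $z \in Z(G)$ of order $p$. If some such $z$ lies outside $\langle g \rangle$, then $\langle g \rangle \cap \langle z \rangle = 1$, and the image of $g$ in the smaller $p$-group $G/\langle z \rangle$ still has order $p^k$. Since the image of $N$ in $G/\langle z \rangle$ is precisely the normal closure of the image of $g^{p^{k-1}}$ there, the inductive hypothesis gives that the image of $g^{p^{k-2}}$ avoids this normal closure; equivalently $g^{p^{k-2}} \notin N \langle z \rangle$, and in particular $g^{p^{k-2}} \notin N$. Otherwise, every central element of order $p$ lies in $\langle g \rangle$, so the nontrivial subgroup of $p$-torsion in $Z(G)$ is contained in the unique subgroup of $\langle g \rangle$ of exponent dividing $p$, namely $\langle g^{p^{k-1}} \rangle$. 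Hence $g^{p^{k-1}}$ is central in $G$, and so $N = \langle g^{p^{k-1}} \rangle$ has order $p$ and cannot contain the element $g^{p^{k-2}}$ of order $p^2$.

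The main subtlety lies in the first subcase: one must verify both that $\langle g \rangle \cap \langle z \rangle = 1$ forces $g$ to retain its full order $p^k$ modulo $\langle z \rangle$, and that the quotient map $G \to G/\langle z \rangle$ sends $N$ onto the normal closure of $g^{p^{k-1}}$ there. Both are routine, and once noted the induction terminates cleanly, with the second subcase providing the absorbing state.
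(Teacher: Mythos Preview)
Your proof is correct but follows a genuinely different route from the paper's. The paper argues via the Frattini subgroup: since $\Phi(N)=N^{p}N'$ is characteristic in $N$ and hence normal in $G$, one cannot have $g^{p^{k-1}}\in\Phi(N)$ (otherwise all its $G$-conjugates lie in $\Phi(N)$, forcing $N=\Phi(N)$, impossible for a nontrivial finite $p$-group); but if $g^{p^{k-2}}\in N$ then $g^{p^{k-1}}=(g^{p^{k-2}})^{p}\in N^{p}\le\Phi(N)$, a contradiction. This is essentially a two-line argument once the Frattini machinery is available. Your inductive reduction through a central subgroup of order $p$ is longer but more elementary, relying only on the nontriviality of the center in a finite $p$-group and the fact that surjections carry normal closures to normal closures. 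One minor organizational remark: what you label the ``base case'' $G=\langle g\rangle$ is not the minimal case for an induction on $|G|$, and it is in any event subsumed by your second subcase (every central element of order $p$ lies in $\langle g\rangle$, so $g^{p^{k-1}}$ is central); you could streamline by dropping it and letting that subcase serve as the terminating step.
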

\begin{proof}
We consider the Frattini subgroup $\Phi(N)$ of $N$, which is defined
to be the intersection of all maximal subgroups of $N$. It is well-known
that $\Phi(N)$ is characteristic in $N$, and that $\Phi(N)=N^{p}N'$
as $N$ is a finite $p$-group. As $\Phi(N)$ is characteristic in
$N$, and $N$ is normal in $G$, we have that $\Phi(N)$ is normal
in $G$.

Firstly, we note that we cannot have $g^{p^{k-1}}$ belonging to $\Phi(N)$:
if $g^{p^{k-1}}\in\Phi(N)$, then all conjugates $h^{-1}g^{p^{k-1}}h$,
for $h\in G$, also lie in $\Phi(N)$. This means that $N=\Phi(N)$,
which is impossible.

Now suppose that $g^{p^{k-2}}\in N$. Then \[
(g^{p^{k-2}})^{p}=g^{p^{k-1}}\in N^{p}\le\Phi(N),\]
a contradiction. Thus $g^{p^{k-2}}\not\in N$, as required.
\end{proof}
$\ $

\section{Corollaries}

Theorem \ref{thm:ourpdef1} together with Corollary 2.4 of \cite{key-But}
imply the following response to the Related Burnside Problem.
\begin{cor}
Let $G$ be an infinite finitely presented group with a presentation
of $p$-deficiency greater than or equal to one, for some prime $p$.
Then $G$ is not torsion.
\end{cor}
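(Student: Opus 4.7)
The plan is to split cases according to whether $\text{def}_p(G)$ is strictly greater than one or equal to one, and in each case extract from a finite-index subgroup of $G$ an element of infinite order.

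In the case $\text{def}_p(G) > 1$, I would invoke Theorem \ref{thm:Main} directly: $G$ is $p$-large, so it contains a finite-index normal subgroup $N$ admitting a surjection $\varphi : N \twoheadrightarrow F_2$. Since $F_2$ is non-trivial and torsion-free, lifting any non-identity element of $F_2$ to $N$ produces an element that cannot be torsion (its image under $\varphi$ would then be a torsion element of $F_2$). Thus $N$, and hence $G$, contains an element of infinite order. This is essentially the content of Corollary 2.4 of \cite{key-But}, on which the statement is built.

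In the borderline case $\text{def}_p(G) = 1$, interpreted (as the convention at the beginning of Section 1 insists) as $G$ having a presentation that actually attains $p$-deficiency one, I would apply Theorem \ref{thm:ourpdef1} to produce a finite-index subgroup $H \leq G$ together with a surjection $\pi : H \twoheadrightarrow \mathbb{Z}$. Any element of $H$ that maps to $1 \in \mathbb{Z}$ under $\pi$ must have infinite order in $H$, and therefore in $G$, so $G$ is again not torsion.

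No serious obstacle is anticipated: the corollary is a bookkeeping step that glues together Theorem \ref{thm:Main}, Theorem \ref{thm:ourpdef1}, and Corollary 2.4 of \cite{key-But}. The only point requiring care is the convention concerning what is meant by "$p$-deficiency one", so that Theorem \ref{thm:ourpdef1} genuinely applies in the equality case; the strict inequality case is handled entirely by the earlier paper.
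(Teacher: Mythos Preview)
Your proposal is correct and matches the paper's approach exactly: the paper simply states that the corollary follows from Theorem~\ref{thm:ourpdef1} together with Corollary~2.4 of \cite{key-But}, which is precisely your case split between $p$-deficiency equal to one and strictly greater than one. Your added remark about the convention for ``$p$-deficiency one'' is appropriate and ensures Theorem~\ref{thm:ourpdef1} genuinely applies.
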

The next corollary incorporates Kazhdan's property (T).
\begin{cor}
\label{cor:Prop(T)} Let $G$ be an infinite finitely presented group
with a presentation of $p$-deficiency greater than or equal to one,
for some prime $p$. Then $G$ does not have property (T).\end{cor}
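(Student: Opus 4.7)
The plan is to split into two cases based on the value of the $p$-deficiency and apply a previously stated result in each case, then invoke the standard fact that property (T) is inherited by finite index subgroups in the form that all such subgroups must have finite abelianization.

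First, I would consider the case where the presentation has $p$-deficiency strictly greater than one. Then Theorem \ref{thm:Main} applies directly and gives that $G$ is $p$-large, so there exists a normal subgroup $K \trianglelefteq G$ of $p$-power index admitting a surjection $K \twoheadrightarrow F_2$. Composing with the abelianization $F_2 \twoheadrightarrow \mathbb{Z}^2 \twoheadrightarrow \mathbb{Z}$ yields a surjection $K \twoheadrightarrow \mathbb{Z}$, so $K$ has infinite abelianization.

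Next I would handle the case where the presentation has $p$-deficiency exactly equal to one. Here Theorem \ref{thm:ourpdef1} gives a finite index subgroup $H \le G$ with a surjection $H \twoheadrightarrow \mathbb{Z}$, so again $H$ has infinite abelianization.

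Finally, in both cases we have produced a finite index subgroup of $G$ with infinite abelianization. By the result of Bekka--de la Harpe--Valette (\cite{key-Bekka}, Corollaries 1.3.6 \& 1.7.2) cited just before Section 1, any group with property (T) has the property that all of its finite index subgroups have finite abelianization. Hence $G$ cannot have property (T), completing the proof. There is no real obstacle here: the work has been done in Theorems \ref{thm:Main} and \ref{thm:ourpdef1}, and this corollary is simply a repackaging via the standard behaviour of property (T) under passage to finite index subgroups.
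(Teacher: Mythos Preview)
Your proof is correct and follows essentially the same approach as the paper: invoke Theorems \ref{thm:Main} and \ref{thm:ourpdef1} to produce a finite index subgroup surjecting onto $\mathbb{Z}$, then cite \cite{key-Bekka} (Corollaries 1.3.6 and 1.7.2) to rule out property (T). The paper's version is simply terser, leaving the case split and the passage from $p$-large to a surjection onto $\mathbb{Z}$ implicit.
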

\begin{proof}
By Theorems \ref{thm:Main} and \ref{thm:ourpdef1}, we know that
$G$ has a finite index subgroup $H$ such that $H$ surjects onto
$\mathbb{Z}$. The result now follows from Corollary 1.3.6 and Corollary
1.7.2 of \cite{key-Bekka}.
\end{proof}
$\ $

Furthermore, we have the following result from \cite{key-Lac1}. Case
(iia) of the proof of Theorem \ref{thm:ourpdef1} follows the proof
of Theorem \ref{thm:(,-Theorem-1.1)} closely. 
\begin{thm}
\label{thm:(,-Theorem-1.1)}\cite{key-Lac1} Let $G$ be a finitely
generated, large group and let $g_{1},\ldots,g_{r}$ be a collection
of elements of $G$. Then for infinitely many integers $n$, $\nicefrac{G}{\langle\langle g_{1}^{n},\ldots,g_{r}^{n}\rangle\rangle}$
is also large. Indeed, this is true when $n$ is any sufficiently
large multiple of $[G:H]$, where $H$ is any finite index normal
subgroup of $G$ that admits a surjective homomorphism onto a non-abelian
free group.
\end{thm}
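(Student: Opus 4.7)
The plan is to follow the structure of Case (ii)(a) of Theorem~\ref{thm:ourpdef1}, pushing the normal closure through a surjection onto a non-abelian free quotient and reducing to a statement purely about free groups. Since $G$ is large, first fix a finite index normal subgroup $H$ of $G$ with $[G:H]=N$ together with a surjection $\phi:H\twoheadrightarrow F$ onto a non-abelian free group $F$. Restrict attention to multiples $n=Nm$, so that each $g_{i}^{n}\in H$, and set $M:=\langle\langle g_{1}^{n},\ldots,g_{r}^{n}\rangle\rangle_{G}$. Because $H$ is normal in $G$ we have $M\subseteq H$, and $H/M$ has finite index (dividing $N$) in $G/M$.

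The core calculation is to identify $\phi(M)$ inside $F$. For any $x\in G$, the identity $xg_{i}^{n}x^{-1}=(xg_{i}^{N}x^{-1})^{m}$ holds simply because $n=Nm$. As $x$ ranges over $G$, the elements $xg_{i}^{N}x^{-1}$ sweep out the $G$-conjugacy class of $g_{i}^{N}$, which lies in $H$ and whose $\phi$-image consists precisely of the $F$-conjugates of the finite family $f_{ij}:=\phi(k_{j}g_{i}^{N}k_{j}^{-1})\in F$, where $k_{1},\ldots,k_{N}$ are coset representatives of $H$ in $G$. Consequently $\phi(M)=\langle\langle f_{ij}^{m}:1\le i\le r,\,1\le j\le N\rangle\rangle_{F}$, and $\phi$ descends to a surjection $H/M\twoheadrightarrow F/\langle\langle f_{ij}^{m}\rangle\rangle_{F}$. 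Since largeness is inherited from quotients (the preimage in $H/M$ of a finite index subgroup of $F/\langle\langle f_{ij}^{m}\rangle\rangle_{F}$ mapping onto a non-abelian free group has the same index and inherits the surjection), showing the right-hand quotient large for infinitely many $m$ forces $H/M$ to be large, and then $G/M$ is large as well.

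The main obstacle is therefore the purely free-group statement: for a non-abelian free group $F$ and any finite family $f_{1},\ldots,f_{t}\in F$, the quotient $F/\langle\langle f_{1}^{m},\ldots,f_{t}^{m}\rangle\rangle$ is large for all sufficiently large $m$. This is exactly the content that is \emph{absent} in the analogous Case (ii)(a) of Theorem~\ref{thm:ourpdef1}, where the elements being killed are $p$-torsion in $H$ and therefore have trivial $\phi$-image in the torsion-free group $F$, so that $\phi(M)$ collapses to the identity and the surjection onto $F$ survives intact. In the present setting no such collapse occurs, and the expected route is small cancellation theory: after cyclically reducing the $f_{i}$ and extracting proper-power roots, possibly after passing to a finite index free subgroup, the words $f_{i}^{m}$ satisfy the $C'(1/6)$ condition for $m$ large enough, so the quotient becomes a non-elementary word-hyperbolic group with proper-power relators. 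Largeness then follows either from Lackenby's own criterion for largeness of hyperbolic groups or, in the spirit of the surrounding paper, by arranging that the induced presentation on a deep enough finite index subgroup has deficiency at least two, at which point a Baumslag--Pride style argument concludes.
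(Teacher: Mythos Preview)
Your reduction step is correct and matches the paper's intended route exactly: push the normal closure into $H$ by taking $n$ a multiple of $N=[G:H]$, rewrite the $G$-normal closure inside $H$ using coset representatives, apply $\phi$, and thereby reduce to showing that $F/\langle\langle f_{1}^{m},\ldots,f_{t}^{m}\rangle\rangle$ is large for all but finitely many $m$. This is precisely how the paper packages the passage from Theorem~\ref{thm:(,-Theorem-1.1)} to the free-group statement (Theorem~\ref{thm:Lem28}), and it is the same mechanism as Case~(ii)(a) of Theorem~\ref{thm:ourpdef1}.

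Where you diverge is in how you handle that free-group statement. You reach for small cancellation/hyperbolicity, and then suggest passing to a deep finite-index subgroup to force ordinary deficiency $\ge 2$. That can be made to work (it is essentially Lackenby's original topological argument, and also Olshanskii--Osin's), but it is considerably heavier than what the paper actually does. The paper's own argument is a two-line application of Theorem~\ref{thm:Main}: the obvious presentation of $F/\langle\langle f_{1}^{m},\ldots,f_{t}^{m}\rangle\rangle$ has $p$-deficiency at least $r - t/p^{\ell_{p}}$, where $p^{\ell_{p}}$ is the highest power of $p$ dividing $m$; this exceeds $1$ as soon as $p^{\ell_{p}}>t/(r-1)$ for \emph{some} prime $p\mid m$, which fails for only finitely many $m$. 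No small cancellation, no hyperbolicity, no passage to a finite-index subgroup is needed---Theorem~\ref{thm:Main} (itself resting on Lackenby's largeness criterion) absorbs all of that. Your alternative ``in the spirit of the surrounding paper'' is therefore pointing in the right direction but is still one level too indirect: you do not need deficiency $\ge 2$ on a subgroup, only $p$-deficiency $>1$ on the group itself.
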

Below is a stronger statement for free groups which is used in the
proof of Theorem \ref{thm:(,-Theorem-1.1)}.
\begin{thm}
\label{thm:Lem28} \cite{key-Lac1} Let $F$ be a finitely generated,
non-abelian free group. Let $g_{1},\ldots,g_{r}$ be a collection
of elements of $F$. Then, for all but finitely many integers $n$,
the quotient $\nicefrac{F}{\langle\langle g_{1}^{n},\ldots,g_{r}^{n}\rangle\rangle}$
is large.
\end{thm}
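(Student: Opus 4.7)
The plan is to apply Theorem~\ref{thm:Main} directly to the obvious presentation of the quotient and to reduce the proof to a purely number-theoretic pigeonhole argument.

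Write $d=\mathrm{rank}(F)\geq 2$ and set $B = r/(d-1)$. Since $F$ is free on $d$ generators, the quotient
\[
Q_n \;:=\; \nicefrac{F}{\langle\langle g_1^n,\ldots,g_r^n\rangle\rangle}
\]
admits the presentation $\langle x_1,\ldots,x_d \mid g_1^n,\ldots,g_r^n\rangle$. For every prime $p$, one has $\nu_p(g_i^n)\geq\nu_p(n)$: writing $n = p^{\nu_p(n)}m$ with $\gcd(m,p)=1$, we have $g_i^n = (g_i^m)^{p^{\nu_p(n)}}$. Hence
\[
\text{def}_p(Q_n) \;\geq\; d - \sum_{i=1}^{r} p^{-\nu_p(g_i^n)} \;\geq\; d - r\,p^{-\nu_p(n)}.
\]
Consequently, to conclude largeness of $Q_n$ via Theorem~\ref{thm:Main} it suffices to produce, for the given $n$, a prime $p$ dividing $n$ with $p^{\nu_p(n)} > B$, because then the displayed quantity strictly exceeds $d - r/B = 1$, so $Q_n$ is $p$-large and in particular large.

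The key claim is then that every integer $n > B^{\pi(B)}$, where $\pi$ denotes the prime-counting function, admits such a prime divisor. If not, every prime $p\mid n$ satisfies $p\leq p^{\nu_p(n)}\leq B$, so $n$ is supported on at most $\pi(B)$ primes, each to a power of at most $B$, forcing
\[
n \;=\; \prod_{p\mid n} p^{\nu_p(n)} \;\leq\; B^{\pi(B)},
\]
a contradiction. Applying Theorem~\ref{thm:Main} to the prime $p$ just produced establishes largeness of $Q_n$ for every $n > B^{\pi(B)}$, leaving only finitely many exceptions.

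The only content-bearing step is the elementary pigeonhole estimate: one must rule out the possibility that $n$ is arbitrarily large while every prime-power divisor of $n$ stays bounded, and this is exactly what the threshold $B^{\pi(B)}$ encodes. Everything else is routine bookkeeping in the $p$-deficiency formula, simplified considerably by the fact that $F$ itself is free, so the presentation contains no relators other than the $g_i^n$ to worry about. I expect no serious obstacle beyond this combinatorial observation.
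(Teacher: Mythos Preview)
Your argument is correct and is essentially identical to the paper's own derivation (stated there as Corollary~\ref{cor:lackcor}): bound $\text{def}_p$ of the obvious presentation from below by $d - r\,p^{-\nu_p(n)}$, and observe that for all but finitely many $n$ some prime power $p^{\nu_p(n)}$ exceeds $B=r/(d-1)$, whence Theorem~\ref{thm:Main} applies. The only difference is that you supply the explicit threshold $B^{\pi(B)}$ where the paper simply asserts the finiteness of the exceptional set.
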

The above theorem has a topological proof. As in the proof of the
Nielsen-Schreier Theorem on subgroups of free groups, $F$ here is
viewed as the fundamental group of a bouquet of circles. Then the
quotient $\nicefrac{F}{\langle\langle g_{1}^{n},\ldots,g_{r}^{n}\rangle\rangle}$
is obtained by attaching 2-cells representing $g_{1}^{n},\ldots,g_{r}^{n}$
along the circles. More details are to be found in \cite{key-Lac1}.

$\ $

Olshanskii and Osin give a shorter algebraic proof of Theorem \ref{thm:(,-Theorem-1.1)}
in \cite{key-Ols}. The main body of Olshanskii and Osin's proof relies
on Theorem \ref{thm:Lem28}, which they also prove with alternative
algebraic arguments.

We remark here that Theorem \ref{thm:Lem28} (and hence Theorem \ref{thm:(,-Theorem-1.1)})
follows from Theorem \ref{thm:Main}. We remind the reader that Theorem
\ref{thm:Main} relies on another result of Lackenby (\cite{key-Lac2},
Theorem 1.15).
\begin{cor}
\label{cor:lackcor} Let $F$ be a free group of rank $r\ge2$, $g_{1},\ldots,g_{k}$
arbitrary elements of $F$. Then $\overline{F}\cong\nicefrac{F}{\langle\langle g_{1}^{q},\ldots,g_{k}^{q}\rangle\rangle}$
is large for all but finitely many $q\in\mathbb{N}$.\end{cor}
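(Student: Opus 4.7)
The plan is to apply Theorem \ref{thm:Main} to the natural presentation
\[
\overline{F} \cong \langle x_1, \ldots, x_r \mid g_1^q, \ldots, g_k^q \rangle
\]
(discarding any $g_i = 1$, which contribute only trivial relators) and to exhibit, for all sufficiently large $q$, a prime $p$ for which the $p$-deficiency of this presentation strictly exceeds one. Theorem \ref{thm:Main} will then give that $\overline{F}$ is $p$-large, and in particular large.

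The first step is to compute $\nu_p(g_i^q)$. Using that every non-trivial element of a free group has a unique primitive root, I would write $g_i = h_i^{n_i}$ with $h_i \in F$ primitive, so that $g_i^q = h_i^{n_i q}$. Since centralizers of non-trivial elements of $F$ are cyclic, generated by the primitive root, any $w \in F$ with $w^{p^\ell} = g_i^q$ must itself be a power of $h_i$; an elementary divisibility argument then yields $\nu_p(g_i^q) = v_p(n_i q) = v_p(n_i) + v_p(q)$, where $v_p$ denotes the $p$-adic valuation on $\mathbb{Z}$. Consequently
\[
\text{def}_p(\overline{F}) = r - \sum_{i=1}^k p^{-v_p(n_i) - v_p(q)} \ge r - k\, p^{-v_p(q)}.
\]

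Next I would choose $p$ according to the arithmetic of $q$. Fix a prime $P > k/(r-1)$, which exists since $r \ge 2$, and let $s$ be the number of primes less than $P$. If $q$ admits some prime factor $p \ge P$, then $v_p(q) \ge 1$ gives $\text{def}_p(\overline{F}) \ge r - k/P > 1$ at once. Otherwise every prime factor of $q$ lies below $P$; writing $q = \prod_j p_j^{a_j}$ yields $\log q \le s\log P \cdot \max_j a_j$, so $\max_j a_j \to \infty$ as $q \to \infty$. Taking $p$ to be the prime achieving this maximum forces $p^{v_p(q)} > k/(r-1)$ for all sufficiently large $q$, and again $\text{def}_p(\overline{F}) > 1$.

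The only delicate point is the valuation identity $\nu_p(g_i^q) = v_p(n_i) + v_p(q)$, which rests on the cyclicity of centralizers in free groups. Once it is in hand, the dichotomy between $q$ having a prime factor exceeding $P$ and $q$ having only small prime factors (one of which must appear to high multiplicity by pigeonhole) covers every sufficiently large $q$, and Theorem \ref{thm:Main} closes the argument.
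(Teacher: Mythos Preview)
Your proposal is correct and follows the same route as the paper: bound the $p$-deficiency of the natural presentation from below by $r - k/p^{v_p(q)}$ and invoke Theorem~\ref{thm:Main} once some prime power in $q$ exceeds $k/(r-1)$. Your careful computation of $\nu_p(g_i^q)$ via primitive roots is more than is needed (the paper just uses the trivial bound $\nu_p(g_i^q)\ge v_p(q)$), and your dichotomy on whether $q$ has a large prime factor simply makes explicit the finiteness of the exceptional set that the paper asserts in its last sentence.
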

\begin{proof}
We consider the $p$-deficiency of $\overline{F}$:\[
\text{def}_{p}(\overline{F})\ge r-\frac{k}{p^{l_{p}}},\]
where $p$ is some prime factor of $q$, and $l_{p}$ is the highest
power of $p$ dividing $q$. By Theorem \ref{thm:Main}, $\overline{F}$
is $p$-large if $\text{def}_{p}(\overline{F})>1$, that is, when
$p^{l_{p}}>\frac{k}{r-1}$. 

So as long as $p^{l_{p}}>\frac{k}{r-1}$ for at least one $p$ dividing
$q$, then $\overline{F}$ is large. That is, for all but finitely
many $q\in\mathbb{N}$, $\overline{F}$ is large.
\end{proof}
$\ $

Lackenby's proof of Theorem \ref{thm:Lem28} (or Corollary \ref{cor:lackcor})
relies on topological arguments, which span over a few pages. Theorem
\ref{thm:Main} has enabled us to present a short proof of a different
spirit. 

$\ $

\section{Examples}

Clearly finitely presented groups of $p$-deficiency one exist and
examples include the infinite dihedral group $D_{\infty}=\langle x_{1},x_{2}|x_{1}^{2},x_{2}^{2}\rangle$,
all groups of deficiency one, and the group $P=\langle x,y,z|x^{3},y^{3},z^{3},(xy)^{3},(xz)^{3},(yz)^{3}\rangle$.
The group $D_{\infty}$ is not torsion nor large. The groups of deficiency
one are not torsion but some are large (see \cite{key-Bu1}). The
group $P$ was verified by MAGMA to be 3-large (and hence is not torsion).
For the group $P$, we used the approach shown below, as is similar
to Subsection 4.2 of \cite{key-But}.
\begin{claim*}
The group $P=\langle x,y,z|x^{3},y^{3},z^{3},(xy)^{3},(xz)^{3},(yz)^{3}\rangle$
is 3-large. \end{claim*}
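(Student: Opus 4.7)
The plan is to find a normal subgroup $N$ of $P$ of index a power of $3$ whose Reidemeister--Schreier presentation has 3-deficiency strictly greater than one. By Theorem \ref{thm:Main} such an $N$ is automatically 3-large, and since $N$ is already normal in $P$ of 3-power index, the same $N$ is a normal subgroup of $P$ of 3-power index admitting a non-abelian free quotient, so $P$ is 3-large.

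To produce $N$, note that $P^{ab}\cong(\mathbb{Z}/3\mathbb{Z})^{3}$, so there is an abundance of normal subgroups of 3-power index in $P$; these can be enumerated in MAGMA, for example along the lower 3-central series. For each candidate $N$ of index $n$, Reidemeister--Schreier rewriting yields a presentation on $n(3-1)+1=2n+1$ generators and $6n$ relators, each a conjugate of one of the six original relators $x^{3},y^{3},z^{3},(xy)^{3},(xz)^{3},(yz)^{3}$ of $P$ and hence a genuine cube. The raw 3-deficiency bound is $2n+1-6n\cdot\tfrac{1}{3}=1$, matching that of $P$, so nothing is gained from the counts alone.

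The strict inequality $\text{def}_{3}(N)>1$ therefore has to be extracted by simplifying this raw presentation. Two types of moves help: deleting a relator that is a consequence of the others (which raises $\text{def}_{3}$ by $\tfrac{1}{3}$), and detecting that a lifted relator $w^{3}$ actually equals $u^{3^{j}}$ for some $j\geq 2$ in $N$ (which raises $\text{def}_{3}$ by $\tfrac{1}{3}-\tfrac{1}{3^{j}}$); one must refrain from Tietze moves that kill a generator via a cube relator, since these \emph{lower} $\text{def}_{3}$ by $\tfrac{2}{3}$. MAGMA's \texttt{Rewrite} and \texttt{Simplify} routines, used with these restrictions in mind, carry out the simplification and report the resulting 3-deficiency of each candidate $N$. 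The main obstacle is combinatorial: the six triangle relators of $P$ are strongly interlocked, so small-index candidates are unlikely to yield any useful redundancies, and one may need to descend several steps down the 3-central series before enough relators become redundant or higher cubes to push $\text{def}_{3}(N)$ strictly above one. Once such an $N$ is exhibited, Theorem \ref{thm:Main} finishes the argument.
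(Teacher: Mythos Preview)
Your proposal is a search strategy, not a proof: you never actually exhibit a subgroup $N$ with $\text{def}_{3}(N)>1$. Saying that MAGMA \emph{could} enumerate candidates and simplify their presentations is not the same as producing a witness, and there is no a priori guarantee that the search succeeds---the Schlage-Puchta multiplicativity only gives $\text{def}_{3}(N)\ge 1$ for every finite-index $N\le P$, and nothing in your argument forces a strict inequality to emerge under Tietze simplification. As written, the final sentence ``Once such an $N$ is exhibited\ldots'' is precisely the missing step.

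The paper's argument is both different in kind and complete. It fixes one specific index-$3$ normal subgroup $C\trianglelefteq P$ (located via MAGMA's \texttt{LowIndexNormalSubgroups}), records its simplified presentation
\[
C=\langle a,b,c,d\ |\ [c^{-1},a^{-1}],\ [d^{-1},b^{-1}],\ adc^{-1}a^{-1}bcd^{-1}b^{-1}\rangle,
\]
and observes by inspection that setting $c=d=1$ kills all three relators, so $C/\langle\langle c,d\rangle\rangle\cong\langle a,b\rangle\cong F_{2}$. Thus $C$ itself surjects onto a non-abelian free group, and since $[P:C]=3$, the group $P$ is $3$-large by definition. No $3$-deficiency bookkeeping is needed at all. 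It is worth noting that this very presentation of $C$ has $3$-deficiency $4-3=1$ (none of the three relators is a proper power), so your route via Theorem~\ref{thm:Main} would \emph{not} have worked on this particular subgroup; the paper sidesteps the issue by exhibiting the free quotient directly rather than chasing a deficiency inequality.
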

\begin{proof}
Using MAGMA's LowIndexNormalSubgroups function, we considered the
following index three normal subgroup of $P$:\[
C=\langle a,b,c,d|[c^{-1},a^{-1}],[d^{-1},b^{-1}],adc^{-1}a^{-1}bcd^{-1}b^{-1}\rangle,\]
which was (at the time of writing) seventh on the list of fourteen
normal subgroups with index at most three in $P$. The above presentation
for $C$ was obtained using MAGMA's Simplify function.

Then we formed the quotient \[
\nicefrac{C}{\langle\langle c,d\rangle\rangle}\]
and we noticed that the quotient is isomorphic to $\langle a,b\rangle\cong F_{2}$.
Hence $C$ is 3-large by definition, and since $C$ is normal in $P$
of index $3$, we have proved that $P$ is 3-large, as required.
\end{proof}
$\ $

With reference to Corollary \ref{cor:Prop(T)}, the following example
is due to Ershov \cite{key-Spain}. Let $d\ge9$ and $p<(d-1)^{2}$,
then the group\[
G\cong\langle x_{1},\ldots,x_{d}|\left[[x_{i},x_{j}],x_{j}\right]=1\ \forall i\ne j,\ x_{i}^{p}=1\rangle\]
is a finitely presented Golod-Shafarevich group with property (T)
(see \cite{key-Ers} for further information). Naturally the $p$-deficiency
of $G$ is not one.

\end{document}